\DeclareMathAlphabet{\mathpzc}{OT1}{pzc}{m}{it}
\newtheorem{theorem}{Theorem}[section]
\newtheorem{proposition}[theorem]{Proposition}
\newtheorem{lemma}[theorem]{Lemma}
\newtheorem{corollary}[theorem]{Corollary}
\theoremstyle{definition}
\newtheorem{definition}{Definition}[section]
\theoremstyle{remark}
\newtheorem{remark}{Remark}
\newtheorem{example}{Example}[section]
\newcommand{\R}{\mathbb{R}}
\newcommand{\Z}{\mathbb{Z}}
\newcommand{\tth}{^{\mbox{\rm{\scriptsize{th}}}}}
\newcommand{\abs}[1]{\left| #1 \right|}
\newcommand{\of}{\circ}
\newcommand{\mbf}{\mathbf}
\newcommand{\norm}[1]{\abs{\abs{#1}}}
\newcommand{\set}[1]{\left\lbrace #1 \right\rbrace}
\newcommand{\mc}{\mathcal}
\newcommand{\mf}{\mathfrak}
\def\id{\operatorname{id}}
\def\Diff{\operatorname{Diff}}
\def\ad{\operatorname{ad}}
\def\Lie{\operatorname{Lie}}
\title[Cocycle Rigidity of New Partially Hyperbolic Actions]{Cocycle Rigidity of Partially Hyperbolic Abelian Actions with Almost Rank One Factors}
\author{Kurt Vinhage}
\begin{document}

\begin{abstract}
We extend the recent progress on the cocycle rigidity of partially hyperbolic homogeneous abelian actions to the setting with rank 1 factors in the universal cover. The method of proof relies on the periodic cycle functional and analysis of the cycle structure, but uses a new argument to give vanishing.
\end{abstract}

\maketitle

\footnotetext{This material is based upon work supported by the National Science Foundation under Award DMS-1604796}

\section{Introduction}

Homogeneous actions provide a useful testing ground and class of models for smooth dynamical systems. We study vector-valued cocycles over homogeneous higher-rank abelian actions which have certain hyperbolicity assumptions. Such systems were first studied by Katok and Spatizer \cite{ks94,ks97} in the case of {\it Anosov} systems, ones in which admit the maximal amount of hyperbolicity. In particular, local smooth rigidity and cocycle rigidity were established for such systems.

In the subsequent years, much progress was made in understanding systems which had less hyperbolicity, see, for instance \cite{dk2005,dk2011,dktoral,zwang-1,zwang-2,vinhage,vinhage-wang,ramirez-diss,damjanovic07,ks94-partial}. In the case of (suspensions of) automorphism actions on tori, a complete solution for was found, both for cocycle and local rigidity, through an adaptation of the classical KAM (Kolmogorov-Arnold-Moser) theory \cite{dktoral} utilizing Fourier analysis or representation theory to decompose subspaces.

Here we employ the other developed approach, the geometric method, which has a few advantages. The first is that the solution to the cocycle is built with explicitly defined functions which are easily seen to converge, instead of producing a formal solution which one later shows is a function. The second is the hope that a local rigidity result may come from the framework provided by our argument. To obtain local rigidity, H\"older cocycle rigidity for {\it perturbations} of the flows we consider is needed. The geometric method has historically proven more robust under perturbations than the representation theory approach.


The principal advantage is the ability to handle the H\"older category. In some cases, cocycle rigidity for smooth cocycles over the actions we consider can be obtained through the representation theory method, as in \cite{ks94-partial}, \cite{miecz}, \cite{ramirez-diss} and \cite{dk-parabolic}. That approach may also produce results in H\"older regularity, but coniderable additional input is needed. For smooth cocycles, regularity considerations are handled through Sobolev spaces, generation of the tangent bundle through stable and unstable subspaces, and elliptic regularity results, which are unavailable in the H\"older category. A new proof of regularity would have to be produced. Furthermore, certain decay of correlations results in the presence $\mf{so}(n,1)$ and $\mf{su}(n,1)$ factors  are needed to produce an argument similar to \cite{ks94-partial}, which likely hold but are not part of the standard literature.

\subsection*{Statement of Main Result and Proof Scheme}

We state our main theorem here, all terms appearing Theorem \ref{thm:main} are defined subsequently.

\begin{theorem}
\label{thm:main}
Let $\alpha$ be a generic restriction of a Weyl chamber flow or twisted Weyl chamber flow. Then if $\beta$ is a H\"older cocycle over $\alpha$ taking values in a vector space, $\beta$ is cohomologous to a constant cocycle via a H\"older continuous transfer function.
\end{theorem}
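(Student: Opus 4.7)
My plan is to follow the geometric method via the periodic cycle functional (PCF), adapting the framework developed for fully higher-rank homogeneous actions \cite{vinhage,vinhage-wang,zwang-1} to handle rank one factors. I would first pass to the universal cover, so that $\alpha$ becomes an $\R^k$-action on $G/\Gamma$ with $G$ a product of simple factors, at least one of which has real rank one.

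The first step is to decompose the tangent bundle into coarse Lyapunov foliations, indexed by Lyapunov functionals $\chi$ on the acting abelian group. For each $\chi$, genericity guarantees an element $a_\chi \in \R^k$ that uniformly contracts the corresponding leaf, and averaging $\beta$ along the $a_\chi$-orbit (the Livsic-type construction for Anosov cocycles over a single element) produces a Hölder candidate transfer function $h_\chi$ along the $\chi$-leaves. Using commutativity of the action and genericity, $h_\chi$ satisfies the coboundary equation for all of $\R^k$ up to a constant on each leaf.

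The second step is to patch the $h_\chi$ into a single global function. Given a basepoint $x$ and a finite sequence of Lyapunov directions, concatenating segments in the corresponding leaves back to $x$ gives a closed path whose total discrepancy from the individual $h_\chi$ defines the periodic cycle functional $\Pi$; vanishing of $\Pi$ on enough cycles is equivalent to $\beta$ being cohomologous to a constant cocycle. Using the Lie-theoretic commutator structure of the coarse Lyapunov subgroups, one reduces to checking $\Pi=0$ on an explicit generating family: Steinberg-type relations in the higher-rank simple factors, plus relations coming directly from the cocycle identity. This portion is essentially the argument already carried out in the references above.

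The heart of the proof, and the step at which something genuinely new must happen, is the rank one factors, namely those with Lie algebra $\mf{so}(n,1)$ or $\mf{su}(n,1)$. Such a factor contributes only a single pair of coarse Lyapunov directions, so there are no internal commutator relations available to close cycles lying in it, and a naive Steinberg-style argument fails. My strategy would be to use the neighboring higher-rank (or other) factors as a lever: combine partial transfer functions from stable foliations that share an acting Cartan element with the horospherical foliation of the rank one factor, thereby producing cycles that see the rank one direction but close up using the commutator structure available elsewhere. One would then use density of the horocyclic leaves in the rank one factor, together with the Hölder control already obtained on the other directions, to force the PCF on these cycles to vanish. The main obstacle I anticipate is precisely this vanishing argument; in particular, maintaining uniform Hölder regularity of the glued transfer function while invoking density/ergodicity on the rank one factor (rather than a soft smooth regularity argument) is exactly the type of difficulty flagged in the introduction, and will likely require a careful quantitative closing lemma along the rank one horocycles.
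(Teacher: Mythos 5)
Your setup of the geometric method is broadly correct: you build the periodic cycle functional from the limits $\lim_n \beta(a^n,x)-\beta(a^n,y)$ along contracted Lyapunov leaves, and you correctly identify that the problem reduces to showing vanishing of the PCF on a sufficiently rich class of Lyapunov cycles. You also correctly locate the crux: almost-rank-one factors (e.g.\ $\mf{so}(n,1)$, $\mf{su}(n,1)$) yield a single coarse Lyapunov pair, so there are no internal commutation relations to exploit within that factor, and the Steinberg/Matsumoto-style generating-relations argument used in earlier works breaks down. That diagnosis is right.

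Where the proposal has a genuine gap is exactly the step you flag as the ``main obstacle.'' Your plan is to use density of horocyclic leaves in the rank one factor plus a ``quantitative closing lemma'' to force vanishing on cycles that see the rank one direction. There is no such argument in the paper, and it is unclear how one would make it precise, since the PCF is a priori only a groupoid morphism on a non-locally-compact free product of nilpotent subgroups; closing-lemma estimates on horocycles do not interface with the algebraic structure of the cycle group. The paper instead makes a structurally different move. Writing $\mc P$ for the free product of the Lyapunov subgroups, it restricts attention to the commutator subgroup $[\mc P,\mc P]$, which is shown (via the new notion of a ``sufficient subclass'') to still give a transitive family of paths. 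The key observations are: (i) the contractible cycle group $\mc C^c$ modulo the stable relations $\mc S$ \emph{does} admit nontrivial homomorphisms to $\R$ in the presence of almost-rank-one factors, so one cannot kill the PCF there directly as in earlier works; (ii) one instead shows $\Phi([\mc P,\mc C^c])=0$ by approximating a path in a single simple factor $G_m$ by paths whose $\omega$-image lies in $\pi_m(\Gamma)$ (using irreducibility/density of the lattice projection, and the local section $s_m$ from Lemma \ref{lem:loc-sec}), completing them to genuine cycles via the other factors, and using commutativity modulo $\mc S$; (iii) this makes $\mc P_B/\ker\Phi|_{\mc C^c_B}$ a \emph{perfect} central extension of $G$, identified via the Hopf presentation $[\mc F(\mc U),\mc F(\mc U)]/[\mc F(\mc U),\mc R(\mc U)]$, and such extensions of simply connected semisimple Lie groups are trivial; (iv) the residual homomorphism factors through $\Gamma$ (or $\Lambda$), which has no homomorphisms to $\R$ by hypothesis. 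In short, the paper replaces your proposed dynamical density/closing argument on horocycles with a topological-group-theoretic argument built on the Hopf (not Matsumoto) presentation of the universal central extension and the irreducibility of the lattice; your proposal does not supply a working substitute for steps (ii)--(iii), which are where the new content lies.
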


The proof scheme follows the geometric method: a potential for the cocycle $\beta$ called the {\it periodic cycle functional} (Section \ref{sec:pcf}) is introduced on families of contracting and expanding foliations for the dynamics called the {\it Lyapunov foliations} (Section \ref{sec:path-grp}). These functions are candidates for a transfer function, but a well-definedness problem occurs: there are many ways to define the transfer function depending on which path along the Lyapunov foliations one chooses. To prove well-definedness, one must verify that the functional vanishes on all cycles of Lyapunov folitations.

In the homogeneous case, much is known about how these foliations form a web throughout $G/\Gamma$ or $G^\pi / \Lambda$. In particular, the way in which the web is woven is directly related to a certain topological group extension of $G$. Our analysis will then revolve around analyzing features of the path and cycle structure, and lead us to the theory of central extensions. Unlike previous cases (the first of which was \cite{dk2011}) we use the {\it Hopf presentation} (Section \ref{sec:hopf}) of the universal central extension, rather than the {\it Matsumoto presentation}. It should be noted that ending up with the universal central extension of $G$ is very unexpected: our dynamical and algebraic arguments to arrive there are much different.

\begin{remark}
Theorem \ref{thm:main} hints at a local rigidity result: following \cite{dk2011}, one can reduce the local rigidity problem to cocycle rigidity over a smooth perturbation $\widetilde{\alpha}$ of $\alpha$. In \cite{dk2011}, however, there is a canonical way of associating key related spaces for the perturbation and homogeneous actions. Such associations are available here as well, but certain features do not follow due to the difference in the argument.
\end{remark}

\subsection*{Restrictions of Weyl Chamber Flows}

Let $G$ be a real, simply connected group, which is a direct product of simple noncompact groups $G = G_1 \times \dots \times G_n$, $n \ge 2$. Let $\mf g = \Lie(G)$, $\mf g_i = \Lie(G_i$), and $\Gamma \subset G$ a lattice projecting densely onto each factor and admitting no homomorphisms to $\R$ (for instance, any irreducible lattice \cite[II.6.7, IX.6.14, IX.6.20]{margulis91}). Then $G$ comes equipped with projections $\pi_i : G \to G_i$ onto the $i\tth$ factor. Choose fixed Cartan subalgebras $\mf a_i \subset \Lie(G_i)$, so that $\mf A = \bigoplus \mf a_i$ is a Cartan subalgebra of $\mf g$. Finally, let $\Delta_{G_i}$ be the roots of $G_i$, and $\Delta_G = \bigsqcup \Delta_{G_i}$ denote the roots of $G$.

A homomorphism $\iota : \R^k \times \Z^l \to \mf A$ is said to be {\it generic} if for every $1 \le i \not= j \le n$:

\begin{enumerate}
\item[($\mf G 1$)] there exists some $\chi \in \Delta_{G_i}$ such that $(d\pi_i \of \iota)^*(\chi) \not= 0$
\item[($\mf G 2$)] whenever $\chi_1 \in \Delta_{G_i},\chi_2 \in \Delta_{G_j}$ with $i \not= j$, then $(d\pi_i \of \iota)^*(\chi_1) \not\propto (d\pi_j \of \iota)^*(\chi_2)$
\end{enumerate} 

 A subgroup $\mf a \subset \mf A$ will be called {\it generic} if $\mf a = \iota(\R^k \times \Z^l)$ for some generic $\iota$. For any $\iota$, we let
 
\begin{equation}
\label{eq:homogeneous-def}
\begin{array}{rccl}
\alpha_{\iota} :  & (\R^k \times \Z^l)  \times G / \Gamma & \to & G / \Gamma \\
 & (a,g\Gamma) & \mapsto & \exp(\iota(a))g\Gamma
\end{array}
\end{equation} 

be the homogeneous action determined by $\iota$. If $\iota$ is an isomorphism, $\alpha_\iota$ will be called a {\it (partially hyperbolic) Weyl chamber flow}.\footnote{We emphasize partially hyperbolic since we do not form a double quotient by the centralizer of $\mf A$} If $\iota$ is generic, we will call the action $\alpha_\iota$ a {\it generic restriction} of a Weyl chamber flow. 

\begin{example}
Consider the case of $\mf{g}_1  = \dots = \mf g_n = \mf{sl}(2,\R)$. Then $\mf a_1 = \dots = \mf a_n  = \set{\begin{pmatrix}
t & 0 \\
0 & -t
\end{pmatrix} : t \in \R}$ is a split Cartan subalgebra. Then an element of $\mf A$ is determined by some $n$-tuple
$(t_1,\dots,t_n)$. The Weyl chamber flow is then the full action of $\R^n$.
A homomorphism $\iota : \Z^2_{(k,l)} \to \mf A$ is then determined by a sequence of vectors
$(x_i,y_i)$, and $\iota(k,l) = (x_1k + y_1l ,\dots,x_nk +y_nl)$. $\iota$ is then generic if and only if all vectors
$(x_i,y_i)$ are nonzero and pairwise nonproportional.
\end{example}

We note that our actions may have the following feature, which was omitted from other works:

\begin{definition}
\label{def:almost-rank-one}
Let $G$, $\Gamma$ and $\iota$ be as above. A homogeneous action $\alpha_\iota$ defined by \eqref{eq:homogeneous-def} has an {\it almost rank one factor} if there exists some $1 \le i \le n$ and fucntional $\bar{\chi} : \R^k \times \Z^l \to \R$ such that every $\chi \in \Delta_{G_i}$ satisfies $(d\pi_i \of \iota)^*\chi \propto \bar{\chi}$.
\end{definition}

Observe that genericity conditions $(\mf G 1)$ and $(\mf G 2)$ do not rule out almost rank one factors. Also note that since the roots $\Delta_{G_i}$ contain a dual basis for $\mf a_i$, an almost rank one factor is equivalent to $d\pi_i \of \iota(\R^k \times \Z^l)$ being contained in a 1-dimensional subalgebra, in particular we allow copies of $\mf{so}(n,1)$ and $\mf{su}(n,1)$. Furthermore, if the lattice $\Gamma$ was not irreducible and projected onto a lattice in an almost rank one factor, then this factor would be a homogeneous rank one factor which immediately implies nontrivial cohomology. H\"older cocycle rigidity for actions without almost rank one factors was obtained in \cite{vinhage-wang}.

\subsection*{Restrictions of Twisted Weyl Chamber Flows}

Let $G$ and $\Gamma$ be as above, and $\pi : G \to GL(V)$ be a representation of $G$ on a finite dimensional vector space $V$. Then the representation $\pi$ has weights with respect to $\mf A \subset \mf g$, denote them by $\Phi_\pi$. We say that $\pi$ is {\it generic with respect to $\iota$} if:

\begin{enumerate}
\item[($\mf G 3$)] if $\chi \in \Phi_\pi$, $\iota^*\chi \not= 0$
\item[($\mf G 4$)] if $\chi_1 \in \Phi_\pi$ and $\chi_2 \in \Delta_{G_j}$, $\iota^*\chi_1 \not\propto (d\pi_j \of \iota)^*\chi_2$
\item[($\mf G 5$)] if $\chi_1,\chi_2 \in \Phi_\pi$, $\iota^*\chi_1$ and $\iota^*\chi_2$ are not negatively proportional
\end{enumerate}

With such a representation, one may form the semidirect product of groups $G^\pi = G \ltimes_\pi V$. We assume that there exists some lattice $\Lambda$ of the form $\Lambda = \Gamma \ltimes_\pi D$, where $D$ is a lattice in $V$. This can be built by considering $G$ (or some factor of $G$) as an algebraic group, and using an algebraic representation $\pi$ of $G$ so that the restriction to the arithmetic lattice preserves integer points. Then one may similarly define a homogeneous action on $G^\pi  / \Lambda$ via formula \eqref{eq:homogeneous-def}. Such an action is called a {\it twisted Weyl chamber flow} if $\iota$ is an isomorphism onto $\mf A$, and a {\it generic restriction} if genericity conditions ($\mf G 1$) - ($\mf G5$) are all satsified.

We establish the following notation: in many occurances, a result will hold for both restrictions of Weyl chamber flows and twisted Weyl chamber flows. We let $G^{(\pi)}$ either $G$ or $G^\pi$, depending on the setting. We let $\mf g^{(\pi)}$ denote its Lie algebra.

\subsection*{Cocycles}

If $\alpha : \R^k \times \Z^l \to \Diff(M)$ is an abelian action, a {\it cocycle} over $\alpha$ taking values in an abelian group $H$ is a map $\beta : (\R^k \times \Z^l) \times M \to H$ such that

\begin{equation}
\label{eq:cocycle-eq}
\beta(a+b,x) = \beta(a,\alpha(b)x) + \beta(b,x)
\end{equation}

Cocycles are useful as they can be used to quantify statistics or behaviour accumulating over the dynamics $\alpha$. The simplest cocycles are {\it constant} cocycles: ones which do not depend on the $x$-coordinate. In this case, the cocycle equation becomes the equation defining a homomorphism. A cocycle $\beta$ is said to be {\it cohomologous to a constant cocycle} if there exists a function $h : M \to H$ and a homomorphism $j : \R^k \times \Z^l \to H$ such that:

\[\beta(a,x) = h(\alpha(a)x) - h(x) + j(a) \]

It is easy to verify \eqref{eq:cocycle-eq} for any choice of $h$ and $j$. In this case, $h$ is called the {\it transfer function}. Note that we have intentionally omitted the regularity of $\beta$ and $h$, as in different cases, different regularities arise and are desired.

\subsection*{Acknowledgements}

The author would like to thank Anatole Katok and Zhenqi Wang for many useful comments improving the exposition of this paper.

\section{Normally Hyperbolicity and the Path and Cycle Groups}
\label{sec:path-grp}

In this section, we recall some key constructions around the periodic cycle functional and cycle structures for partially hyperbolic actions. An action $\alpha : \R^k \times \Z^l \to \Diff^\infty(M)$ acts {\it uniformly normally hyperbolically} with respect to a foliation $\mc N$ with smooth leaves if for some $a \in \R^k \times \Z^l$, there is a splitting:

\[ TM = T\mc N \oplus E^s_a \oplus E^u_a \]

and \[\norm{d\alpha(a)|_{E^s_a}} < \lambda < m\left(d\alpha(a)|_{T\mc N}\right) \le \norm{d\alpha(a)|_{T\mc N}} <\lambda^{-1} < m\left(d\alpha(a)|_{E^u_a}\right) \]

In the case of a homogeneous action, these bundles exist for almost every $a \in A$, and can be described explicitly. Assume $\alpha_\iota$ is a homogeneous action defined as in \eqref{eq:homogeneous-def}. Then there exists a subset $\Delta \subset (\R^k \times\Z^l)^* \setminus \set{0}$, the Lyapunov exponents of the actions, such that the adjoint action of $\iota(\R^k \times \Z^l)$ has generalized eigenspace decomposition which write $\mf g^{(\pi)}$ as $\displaystyle\mf n \oplus \bigoplus_{\chi \in \Delta} \mf u_\chi$ (note that we do not assume that each $\ad_X$ is diagonazable, each $\mf u_\chi$ corresponds to a Jordan block). With this decomposition, the eigenvalue of $\ad_{\iota(X)}$ on $\mf u_\chi$ has real part $\chi(X)$ and $\mf n$ corresponds to the purely imaginary eigenvalues. In the case of the Weyl chamber flow, $\mf n = \mf A$ and the set of Lyapunov exponents is exactly the roots and weights, so we may employ genericity conditions ($\mf G 1$) - ($\mf G 5$). Note that condition ($\mf G3$) implies that $\mf n \subset \mf g$.

Each $U_\chi = \exp(\mf u_\chi)$ is a nilpotent subgroup of $G$ called a Lyapunov subgroup, and the coset foliation $\mc U_\chi = \set{ U_\chi g : g \in G}$ is the {\it Lyapunov foliation}.

\begin{remark}
More generally, a {\it coarse} Lyapunov foliation is $U_{(\chi)} = \prod_{\chi' \propto \chi} U_\chi$. One may, in a similar way, define the coarse Lyapunov foliation as the foliation tangent to the intersection of the form $\bigcap_i E^s_{a_i}$ having the smallest possible positive dimension. In this way, one can obtain similar foliations for non-homogeneous actions.
\end{remark}

\begin{lemma}
\label{lem:generates}
Conditions ($\mf G 1$) and ($\mf G 3$) imply that the $U_\chi$ (locally) generate $G^{(\pi)}$
\end{lemma}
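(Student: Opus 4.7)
The plan is to leverage the direct sum structure $\mf g = \bigoplus_i \mf g_i$ to reduce to each simple factor separately, and then apply an ideal argument based on simplicity of $\mf g_i$ within each factor. Because distinct factors $\mf g_i, \mf g_j$ commute, any iterated bracket of elements drawn from different factors vanishes, so the Lie subalgebra $\mf h \subset \mf g$ generated by all of the $\mf u_\chi$ decomposes as $\mf h = \bigoplus_i \mf h_i$, where $\mf h_i$ is the subalgebra of $\mf g_i$ generated by the root spaces of $\mf g_i$ appearing among the $\mf u_\chi$.

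Fix $i$ and set $V_i := (d\pi_i \of \iota)(\R^k \times \Z^l) \subset \mf a_i$. The root spaces in $\mf g_i$ contributing to the Lyapunov decomposition are precisely $\mf g_\alpha$ for $\alpha$ in $S_i := \{\alpha \in \Delta_{G_i} : \alpha|_{V_i} \neq 0\}$; set $T_i := \Delta_{G_i} \setminus S_i$. Condition $(\mf G 1)$ gives $S_i \neq \emptyset$, so $\mf h_i \neq 0$. The main step is to verify $\mf h_i$ is an ideal of $\mf g_i$, from which simplicity forces $\mf h_i = \mf g_i$. For this it suffices to check $[\mf g_\beta, \mf g_\alpha] \subset \mf h_i$ for every $\alpha \in S_i$ and $\beta \in \Delta_{G_i}$ (the centralizer of $\mf a_i$ in $\mf g_i$ preserves each root space, hence $\mf h_i$). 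If $\beta = -\alpha$, then $\beta \in S_i$ and the bracket lies in $[\mf h_i, \mf h_i] \subset \mf h_i$; if $\alpha + \beta$ is not a root, the bracket vanishes; if $\alpha + \beta \in S_i$, the bracket sits in $\mf g_{\alpha+\beta} \subset \mf h_i$. The delicate case is $\alpha + \beta \in T_i$: then $(\alpha+\beta)|_{V_i} = 0$ while $\alpha|_{V_i} \neq 0$, which forces $\beta|_{V_i} \neq 0$ and hence $\beta \in S_i$, so once again $[\mf g_\beta, \mf g_\alpha] \subset [\mf h_i, \mf h_i] \subset \mf h_i$.

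For the twisted case $\mf g^\pi = \mf g \ltimes V$ we additionally need the weight spaces to cover $V$. Condition $(\mf G 3)$ says every weight $\chi \in \Phi_\pi$ satisfies $\iota^*\chi \neq 0$, so the weight decomposition $V = \bigoplus_{\chi \in \Phi_\pi} V_\chi$ is already a sum of Lyapunov subgroups and $V$ is generated immediately. Combining with the factor-wise generation of $\mf g$ yields all of $\mf g^{(\pi)}$, and local generation of $G^{(\pi)}$ follows by exponentiating a neighborhood of zero. I expect the only mildly subtle point to be the case $\alpha + \beta \in T_i$ in the ideal argument: it is precisely this case that prevents one from trying to show $S_i$ is closed under root addition (which generally fails) and forces the cleaner ideal-closure argument rather than a direct combinatorial argument on the root system.
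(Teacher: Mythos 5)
Your proof is correct and rests on the same core idea as the paper's---show the Lie subalgebra generated by the Lyapunov spaces is an ideal, then invoke $(\mf G 1)$ together with (semi)simplicity---but the two proofs establish the ideal property differently. The paper works directly at the level of the Lyapunov decomposition $\mf g = \mf n \oplus \bigoplus_\chi \mf u_\chi$ and argues that the generated subalgebra $\mf g'$ is invariant under $\ad_X$ for $X \in \mf u_\chi$ (trivially, since $X \in \mf g'$) and under $\ad_Y$ for $Y \in \mf n$ (because $\mf n$ centralizes $\iota(\R^k\times\Z^l)$ and so preserves each $\mf u_\chi$, then one uses the derivation property); this handles all of $\mf g$ in one stroke and never needs to inspect how roots add. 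You instead decompose $\mf h = \bigoplus_i \mf h_i$ factor by factor (which is legitimate since each root space lies in a single simple summand) and prove $\mf h_i \trianglelefteq \mf g_i$ by a case analysis on pairs of roots $\alpha \in S_i$, $\beta \in \Delta_{G_i}$; the nice observation you make is that if $\alpha + \beta \in T_i$ then $\beta$ is forced into $S_i$, so the ideal-closure argument succeeds even though $S_i$ is not closed under root addition. Your route is more elementary and explicit about the root combinatorics; the paper's is shorter and structurally cleaner, avoiding the case analysis by working at the coarser Lyapunov-exponent level. The twisted case is handled identically in both proofs via $(\mf G 3)$, and both finish by passing from Lie-algebra generation to local group generation via exponentiation (the paper cites Baker--Campbell--Hausdorff, you state it as the standard fact). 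No gaps.
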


\begin{proof}
Let us address the semisimple case first. Consider the group generated by the $U_\chi$. Since each $\chi$ is a restriction of a root on $\mf A$, each $U_\chi$ is a product of root subspaces $U_\chi = \prod_{r \in [\chi]} U_r$, where $[\chi] = \set{ r \in \Delta_G : \iota^*r = \chi}$ is the collection of roots which restrict to $\chi$. If $\iota^*r = \chi$, then $-\iota^*r = -\chi$, so $-\chi$ is a Lyapunov exponent, and $-r \in [-\chi]$. Then we may generate $\exp([\mf u_{r},\mf u_{-r}]) \subset \mf n$ using $U_\chi$ and $U_{-\chi}$ by standard Lie theory.
Note that if $\chi_1 \not= -\chi_2$, then $[\mf u_{\chi_1},\mf u_{\chi_2}] \subset \mf u_{\chi_1+\chi_2}$ (with the convention that $\mf u_{\chi_1+\chi_2} = 0$ if $\chi_1+\chi_2$ is not a Lyapunov exponent). In conclusion, since $[\mf n,\mf u_\chi] \subset \mf u_\chi$, we conclude that the group generated by the $U_\chi$ is a Lie subgroup (whose Lie algebra is generated by $ \left( \bigoplus_\chi [\mf u_{\chi}, \mf u_{-\chi}] \right) \oplus \left( \bigoplus_\chi \mf u_\chi \right)$.

Let $\mf g'$ denote the Lie algebra of the group generated by the $U_\chi$. Then again, by the Baker-Campbell-Hausdorff formula, we can express any element of $\mf g'$ as the exponential of some bracket polynomial in the $\mf u_\chi$. Note that if $X \in \mf u_\chi$ for some $\chi$, then applying $\ad_X$ to a bracket polynomial will keep it a bracket polynomial. If we take $Y \in \mf n$, then $\ad_Y(\mf u_\chi) \subset \mf u_\chi$, so it will continue to be a bracket polynomial. In particular, we get that the subalgebra is actually an ideal. Since it intersects every simple factor by ($\mf G 1$), we conclude that $\mf g' = \mf g$, and hence that the $U_\chi$ generate $G$.

In the twisted case, our arguments above show that the semisimple part is generated. Then ($\mf G 3$) immediately implies that all weight spaces are generated, so $G^\pi$ must be generated.
\end{proof}

\subsection{Lyapunov Paths and Cycles}

Let $\alpha : \R^k \times \Z^l \to \Diff^\infty(M)$ be an action acting normally hyperbolic with respect to $\mc N$ with Lyapunov foliations $\mc F_\chi$. Then a Lyapunov path is a sequence of points $\rho = (x_0,x_1,\dots,x_n)$ such that $x_{i+1} \in \mc F_{\chi_i}(x_i)$, $0 \le i \le n-1$. The corresponding sequence $(\chi_0,\dots,\chi_{n-1})$ is called the {\it combinatorial pattern} of $\rho$. $\rho$ is a {\it cycle} if $x_n = x_0$.

Let $\mf P$ denote the space of Lyapunov paths (with arbitrary base point). In the case of a homogeneous action, we introduce a topology on $\mf P$ by identifying the the paths with combinatorial pattern $(\chi_0,\dots,\chi_{n-1})$ as a subset of $\widetilde{M}^{n+1}$ (where $\widetilde{M}$ is the universal cover of $M$), then taking a disjoint union of all combinatorial patterns. Note that the space of all cycles with a fixed basepoint $x_0$ forms a group under concatenation, if we make the following identifications:

\begin{eqnarray}
\label{eq:out-n-back} (x_0,\dots,x_i,y,x_i,\dots,x_n) & \sim & (x_0,\dots,x_i,\dots,x_n) \\
\label{eq:redundant} (x_0,\dots,x_{i-1},x_i,x_i,x_{i+1},\dots,x_n) & \sim & (x_0,\dots,x_{i-1},x_i,x_{i+1},\dots,x_n)
\end{eqnarray}

\subsection{The path group, its subgroups and local sections}

If $\alpha$ is a homogeneous flow, let be $\mc P$ the space of Lyapunov paths with a fixed basepoint at $e \in G^{(\pi)}$. Then $\mc P$ (with identifications \eqref{eq:out-n-back} and \eqref{eq:redundant}) carries a group structure corresponding to the free product of the Lyapunov subgroups, $\mc P \cong \prod^*_{\chi \in \Delta} U_\chi$. We call $\mc P$ the {\it path group}, and its group structure is only guaranteed in the homogeneous setting. In geometric terms, if $\rho = (e,g_1,g_2,\dots,g_n)$ is a path based at $e$ let $\omega(\rho) = g_n$ be its endpoint and $\rho^x = (x,g_1x,\dots,g_nx)$ be the path right-translated by $x$. We then define $\rho_1 \cdot \rho_2 = \rho_1 * \rho_2^{\omega(\rho_1)}$. The following appeared as Proposition 4.2 and Lemma 6.2 in \cite{vinhage}:

\begin{proposition}
The identification topology on $\mc P$ is a locally path-connected, path-connected group topology. With the subspace topology, $\mc C$ and $\mc C^c$ are also locally path-connected.
\end{proposition}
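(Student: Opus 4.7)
The plan is to leverage the Lie group structure of each Lyapunov subgroup $U_\chi$ together with the stratification of $\mc P$ by combinatorial patterns. Each stratum $X_\pi := U_{\chi_0} \times \dots \times U_{\chi_{n-1}}$ in the disjoint union defining $\mc P$ is a product of connected, simply connected, nilpotent Lie groups (each diffeomorphic to a vector space via the exponential map), hence path-connected and locally path-connected; my task reduces to transferring these properties through the quotient map $q : \widetilde{\mc P} \to \mc P$ and verifying compatibility with the group operations.

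Path-connectedness of $\mc P$ is essentially automatic: for any word $(u_0, \dots, u_{n-1}) \in X_\pi$, choose paths $u_i(t)$ from $u_i$ to $e$ within $U_{\chi_i}$; the resulting path stays in $X_\pi$ and terminates at the all-identity word, which under repeated application of \eqref{eq:redundant} is identified with the trivial path. The group topology claim follows by realizing concatenation $X_\pi \times X_{\pi'} \to X_{\pi \cdot \pi'}$ as juxtaposition of words and inversion as the map which reverses a word and inverts each letter --- both are continuous on strata and descend compatibly to the quotient.

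The main difficulty is local path-connectedness, since open neighborhoods in the identification topology must be saturated under the insertions of trivial letters and spike pairs $(v, v^{-1})$ arising from \eqref{eq:out-n-back} and \eqref{eq:redundant}. My approach is: at any $\rho \in \mc P$ represented by a reduced word, choose path-connected open neighborhoods $V_i \subset U_{\chi_i}$ around each coordinate (available by local path-connectedness of each $U_\chi$), form the full saturation of $V_0 \times \dots \times V_{n-1}$ under the identifications, and verify that the result is open and path-connected in $\mc P$. Path-connectedness of this saturation reduces to the contraction argument of the previous paragraph, applied to the extra letters in longer strata; openness requires checking that the preimage under $q$ intersects every stratum in an open set, which exploits the fact that the identifications are parameterized continuously by the inserted elements $v \in U_\chi$.

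For $\mc C$ and $\mc C^c$ with the subspace topology, the approach is to use a local section of the endpoint homomorphism $\omega : \mc P \to G^{(\pi)}$. Because $\omega$ surjects onto the Lie group $G^{(\pi)}$ by Lemma \ref{lem:generates}, I expect a continuous local section $s : V \to \mc P$ defined on a neighborhood $V$ of $e \in G^{(\pi)}$, which yields a local product decomposition $\omega^{-1}(V) \cong V \times \mc C$ near the identity (and by translation near any cycle). Local path-connectedness of $\mc P$ and of the Lie-group factor $V$ then transfers to $\mc C$ and, by restricting to the identity component, to $\mc C^c$. The construction of such a local section --- foreshadowed by the subsection title --- is the last and, I expect, most delicate ingredient: it amounts to continuously decomposing elements of $G^{(\pi)}$ near the identity as ordered products of Lyapunov elements, something that the generating result of Lemma \ref{lem:generates} makes plausible but must be set up carefully to yield continuity rather than merely set-theoretic surjectivity.
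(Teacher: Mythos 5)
Your outline has a genuine gap at the step you yourself flag as the ``main difficulty,'' and a closely related gap in the group-topology step. Note first that the paper itself does not prove this proposition; it cites Proposition~4.2 and Lemma~6.2 of \cite{vinhage}, so there is no in-paper proof to compare against, and your attempt must be judged on its own. Regarding the substance: the saturation of a box $V_0 \times \cdots \times V_{n-1}$ is \emph{not} open in the identification topology. In a stratum whose combinatorial pattern is obtained by inserting a repeated pair $(\chi',\chi')$, a word $(\ldots, w_i, a, b, w_{i+1},\ldots)$ lies in the saturation only if $ab = e$ (and $w_j \in V_j$); the condition $ab=e$ cuts out a closed submanifold of codimension $\dim U_{\chi'}$ in that stratum, not an open set, so the preimage of your saturation under $q$ fails to be open. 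Your remark that openness ``exploits the fact that the identifications are parameterized continuously by the inserted elements $v \in U_\chi$'' does not rescue this: the set $\{(v,v^{-1}) : v \in U_{\chi'}\}$ is the graph of inversion, a closed thin subset of $U_{\chi'} \times U_{\chi'}$, not an open one. A genuine basic open neighborhood of a reduced word must intersect \emph{every} combinatorial pattern in an open set, chosen compatibly across infinitely many strata, and building such a system while keeping it small and path-connected is the heart of the proof, not an afterthought.

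The same phenomenon undermines your one-line justification that multiplication and inversion ``descend compatibly to the quotient.'' For a map $\mc P \times \mc P \to \mc P$ induced by a continuous map on the disjoint union of strata to be continuous, one needs $q \times q$ to itself be a quotient map, which is not automatic for quotient maps $q$. The usual sufficient condition, that $q$ be open, is unavailable here: by the discussion above, the saturation of an open set need not be open. This is precisely the classical delicacy of putting a group topology on a free product of topological groups, and the reference resolves it using properties specific to the $U_\chi$ being simply connected nilpotent Lie groups rather than by generic quotient-topology reasoning. Your use of a local section of $\omega$ for the $\mc C$, $\mc C^c$ statement is the right idea (compare Lemma~\ref{lem:loc-sec}), but it presupposes the topological group and local path-connectedness structure on $\mc P$ that the earlier parts of your argument have not actually established.
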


Observe that $\omega : \mc P \to G^{(\pi)}$ is the canonical projection induced by the inclusions of the $U_\chi$ into $G^{(\pi)}$ guaranteed by the universal property of free products. We let $\mc C^c$ denote the group of contractible cycles (ie, cycles in $G$), so that $\mc C^c$ is a closed normal subgroup of $\mc P$ (since it is exactly the kernel of $\omega$). Similarly, we let $\mc C$ denote the space of cycles in $M$. This is a subgroup, but not normal, since it is $\omega^{-1}(\Gamma)$ (respectively, $\omega^{-1}(\Lambda)$), and $\Gamma$ (respectively, $\Lambda$) is not normal in $G$ (respectively, $G^\pi$). Recall conditions ($\mf G 1$) - ($\mf G 5$) for a generic action, and that we assume $G = G_1 \times \dots \times G_n$. Condition $(\mf G 2)$ guarantees that each $U_\chi \subset G_i$ for each $\chi \in \Delta_{G_i}$. Furthermore, condition ($\mf G 4$) shows that if $\chi \in \Phi_\pi$, $U_\chi \subset V$. We then let $\mc P_i$ denote the subgroup of $\mc P$ generated by $\set{U_\chi : \chi \in \Delta_{G_i}}$ and $\mc P_{ss}$ be the group generated by the $\mc P_i$ (ie, the semisimple components. Finally, let $\mc P_V$ denote the subgroup of $\mc P$ generated by $\set{U_\chi : \chi \in \Phi_\pi}$. The following Lemma also appeared in \cite[Lemma 5.7]{vinhage}, and the proof is identical:

\begin{lemma}
\label{lem:loc-sec}
$\omega$ has a local section $s$. Furthermore, $\omega|_{\mc P_i} : \mc P_i \to G_i$ and $\omega|_{\mc P_V} : \mc P_V \to V$ have local sections.
\end{lemma}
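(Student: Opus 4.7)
The plan is to produce a continuous local section of $\omega$ near the identity of $G^{(\pi)}$; a section near any other point then follows by left-translation using the group structures on $\mc P$ and $G^{(\pi)}$. The main step is to find a finite sequence $\chi_1, \dots, \chi_N \in \Delta$ (with repetitions allowed) and a point $(u_1^*, \dots, u_N^*) \in \prod_i U_{\chi_i}$ such that the smooth multiplication map
\[ \Psi : U_{\chi_1} \times \cdots \times U_{\chi_N} \to G^{(\pi)}, \qquad (u_1, \dots, u_N) \mapsto u_1 u_2 \cdots u_N \]
is a submersion at $(u_1^*, \dots, u_N^*)$. Granted this, the implicit function theorem provides a smooth local right inverse $\sigma$ of $\Psi$ on an open neighborhood $V$ of $g_0 := \Psi(u_1^*, \dots, u_N^*)$. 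Composing $\sigma$ with the continuous map $(u_1, \dots, u_N) \mapsto u_1 \cdot u_2 \cdots u_N \in \mc P$ (product in $\mc P$) yields a section $s_0 : V \to \mc P$ of $\omega$, which I translate back to a neighborhood of the identity by setting $s(g) = p_0^{-1} \cdot s_0(g_0 g)$, where $p_0 = u_1^* \cdot u_2^* \cdots u_N^* \in \mc P$ satisfies $\omega(p_0) = g_0$.

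Finding the regular point is the main technical obstacle, and is where Lemma \ref{lem:generates} enters crucially. At $(e, \dots, e)$ the differential of $\Psi$ has image $\sum_i \mf u_{\chi_i}$, so the centralizer piece $\mf n$ is never reached directly, and $(e, \dots, e)$ cannot be a regular point when $\mf n \ne 0$. However, since the $U_\chi$ generate $G^{(\pi)}$, the subspaces $\mf u_\chi$ bracket-generate $\mf g^{(\pi)}$, and in particular every bracket $[\mf u_\chi, \mf u_{-\chi}] \subset \mf n$ lies in the Lie span of $\bigcup_\chi \mf u_\chi$. Perturbing $(u_1^*, \dots, u_N^*)$ away from the identity introduces higher-order BCH terms into the linearization: the identity $\Ad(\exp(X)) Y = Y + [X,Y] + \cdots$ for $X \in \mf u_{-\chi}$, $Y \in \mf u_\chi$ contributes the bracket direction $[X,Y] \in \mf n$ to the image of $d\Psi$. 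Iterating this with a long enough sequence $\chi_1, \dots, \chi_N$ and appropriately chosen perturbations arranges $d\Psi$ to be surjective at some point. This is essentially the Chow-Rashevsky theorem applied to the bracket-generating distribution spanned by $\bigcup_\chi \mf u_\chi$ on $G^{(\pi)}$, and is the place where the calculation, while standard, is most delicate.

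For $\omega|_{\mc P_i}$ and $\omega|_{\mc P_V}$ the same argument applies, restricting the choice of Lyapunov exponents to $\Delta_{G_i}$ and $\Phi_\pi$ respectively. Condition ($\mf G 2$) ensures $U_\chi \subset G_i$ for each $\chi \in \Delta_{G_i}$, and the argument in the proof of Lemma \ref{lem:generates} shows that these subgroups generate $G_i$; similarly, condition ($\mf G 4$) places each $U_\chi$ with $\chi \in \Phi_\pi$ inside $V$. The case of $\mc P_V$ is in fact immediate, since $V$ is abelian and $V = \bigoplus_{\chi \in \Phi_\pi} V_\chi$ as a vector space, so the product map is already a linear isomorphism and no bracket argument is required.
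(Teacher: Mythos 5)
The paper does not actually reproduce a proof here — it defers to \cite[Lemma~5.7]{vinhage}, asserting the proof is identical. Your argument is the standard one for that result and almost certainly matches the cited proof: find a regular point of a suitable monomial map $\Psi : U_{\chi_1}\times\cdots\times U_{\chi_N}\to G^{(\pi)}$ using bracket generation (Lemma~\ref{lem:generates}), apply the inverse/implicit function theorem to get a smooth local right inverse, and push it into $\mc P$ via the continuous inclusion of the product into the free product, translating back to the identity with the group structure. The one place you are slightly loose is the appeal to Chow--Rashevsky: that theorem gives reachability of the orbit, but what you actually need is a regular \emph{point} of a fixed $\Psi$. The cleanest way to get this is to note that $G^{(\pi)}=\bigcup_N \operatorname{Im}(\Psi_N)$ (since the $U_\chi$ generate), apply Sard's theorem to conclude that if no $\Psi_N$ had a regular value in its image then $G^{(\pi)}$ would be a countable union of null sets; any regular value in the image then yields a regular point. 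Your treatment of $\mc P_V$ (direct sum decomposition, addition is a linear isomorphism) and of $\mc P_i$ (restrict the generating sets and rerun the argument) is correct.
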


\subsection{Sufficient Subclasses}
\label{sec:suff}

We introduce a new notion of a {\it sufficient subclass}:

\begin{definition}
If $\mf P$ is the space of Lyapunov paths on $M$ (with arbitrary base point), a subset $\Omega \subset \mf P$ is said to be a {\it sufficient subclass of Lyapunov paths} if:

\begin{enumerate}[(i)]
\item whenever $\rho_1,\rho_2 \in \Omega$ are concatenable, $\rho_1 * \rho_2 \in \Omega$
\item whenever $\rho \in \Omega$, then $\bar{\rho}$, the reverse of $\rho$, is in $\Omega$
\item $\Omega$ is closed in $\mf P$ with the identification topology
\item If $x,y \in M$, there exists some $\rho \in \Omega$ which begins at $x$ and ends at $y$
\end{enumerate}
\end{definition}

Sufficient subclasses are meant to represent a subclass of paths which is still transitive in $M$ and carries a groupoid structure. In this way, a functional on $\Omega$ will induce a function on $M$ if and only if it vanishes on the cycles in $\Omega$. In the case of homogeneous actions, we may produce subclasses readily:

\begin{lemma}
\label{lem:subgroup-sufficient}
If $\mc X \subset \mc P$ is a closed subgroup of $\mc P$ such that $\omega(\mc X) = G^{(\pi)}$, then $\Omega = \set{ \rho^x : \rho \in \mc X, x \in G}$ is a sufficient subclass of Lyapunov paths.
\end{lemma}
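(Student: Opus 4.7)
The plan is to verify the four defining axioms of a sufficient subclass directly, leveraging the group structure on $\mc P$ and the two hypotheses on $\mc X$. Throughout I work with lifts to the universal cover $\widetilde M = G^{(\pi)}$ and project to $M$ only at the end, so that formulas like $\rho^x$ make literal sense in $G^{(\pi)}$.

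Axiom (iv) is the shortest and is where the surjectivity $\omega(\mc X) = G^{(\pi)}$ is used. Given $x,y \in M$, I pick lifts $\tilde x,\tilde y \in G^{(\pi)}$, choose $\rho \in \mc X$ with $\omega(\rho) = \tilde y\, \tilde x^{-1}$, and then $\rho^{\tilde x}$ is a Lyapunov path in $G^{(\pi)}$ from $\tilde x$ to $\tilde y$ whose projection to $M$ starts at $x$ and ends at $y$.

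For axioms (i) and (ii) the key is a direct unwinding of the definitions of $*$, $\,\cdot\,$, and $(-)^x$. If $\rho_1^{x_1}$ and $\rho_2^{x_2}$ are concatenable, then necessarily $x_2 = \omega(\rho_1)x_1$, and writing $\rho_1 = (e,g_1,\dots,g_n)$, $\rho_2=(e,h_1,\dots,h_m)$ one checks
\[
\rho_1^{x_1} * \rho_2^{x_2} \;=\; (\rho_1 \cdot \rho_2)^{x_1},
\]
since both sides equal $(x_1,g_1 x_1,\dots,g_n x_1,h_1 g_n x_1,\dots,h_m g_n x_1)$. Closure of $\mc X$ under products then puts $\rho_1\cdot\rho_2$ in $\mc X$, hence the concatenation in $\Omega$. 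For (ii), an analogous unwinding gives
\[
\overline{\rho^x} \;=\; (\rho^{-1})^{\omega(\rho)\, x},
\]
where $\rho^{-1}$ is the inverse in the free product $\mc P$; closure of $\mc X$ under inversion puts this in $\Omega$.

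Axiom (iii), closedness of $\Omega$ in $\mf P$, is where I expect the only real subtlety. Within a fixed combinatorial stratum $(\chi_0,\dots,\chi_{n-1})$ of $\mf P$, a Lyapunov path is uniquely parametrized by its starting vertex $x \in M$ together with a step sequence $(u_0,\dots,u_{n-1}) \in U_{\chi_0}\times\cdots\times U_{\chi_{n-1}}$ via $x_{j+1} = u_j x_j$. Under this parametrization, paths in $\Omega$ with this combinatorial pattern are exactly those whose step sequence, read as a word in $\freep_{\chi \in \Delta} U_\chi = \mc P$, lies in $\mc X$, while the starting vertex may be arbitrary. Since $\mc X$ is closed in $\mc P$, this is a closed condition on each stratum, and since $\mf P$ is topologized as the disjoint union of its strata, $\Omega$ is closed in $\mf P$. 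The only bookkeeping obstacle is verifying that the topology on $\mc P$ coming from the free product structure agrees, stratum by stratum, with the vertex-wise topology inherited from $\widetilde M^{n+1}$; once this is observed (by a right-translation argument identifying the two parametrizations), closedness transfers and the lemma follows.
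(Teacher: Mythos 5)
Your proof is correct and follows essentially the same route as the paper's, which simply states that (i) and (ii) follow from $\mc X$ being a subgroup and the definition of multiplication in $\mc P$, that (iii) follows from closedness of $\mc X$ together with the continuous dependence of the coset coordinates $x_{j+1}x_j^{-1} \in U_{\chi_j}$ on the vertices, and that (iv) follows from surjectivity of $\omega|_{\mc X}$. You have merely unwound those one-line justifications into explicit formulas — in particular the identities $\rho_1^{x_1}*\rho_2^{x_2} = (\rho_1\cdot\rho_2)^{x_1}$ and $\overline{\rho^x} = (\rho^{-1})^{\omega(\rho)x}$ for (i)--(ii), and the stratum-by-stratum reparametrization (starting vertex, step word) for (iii) — all of which check out.
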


\begin{proof}
We verify properties (i)-(iv). (i) and (ii) follow immediately from the fact that $X$ is a subgroup and the definition of group multiplication in $\mc P$. (iii) follows because $\mc X$ is closed in $\mc P$, and the fact that the coordinates for the cosets $U_\chi g$ vary continuously with $g$. (iv) follows from the fact that $\omega(X) = G$.
\end{proof}

Lemma \ref{lem:subgroup-sufficient} will be applied to the subgroup $[\mc P, \mc P]$, which remains transitive on any semisimple group. We define another subset of the path and cycle group. A cycle $\sigma$ will be called {\it stable} if there exists some $a \in \R^k \times \Z^l$ such that for every $\chi$ in the combinatorial pattern of $\sigma$, $\chi(a) < 0$. In other words, the path $\sigma$ is completely contained in the stable leaf of $\alpha(a)$. Then let $\mc S \subset \mc P$ be constructed in the following way: first, take the group generated by all stable cycles. Then, take its normal closure (the group generated all possible conjugations of elements of $\mc S$ by elements of $\mc P$). Finally, take the topolgoical closure in $\mc P$.

\begin{lemma}
\label{lem:path-directprod}
If $\iota$ is generic, then:

\begin{enumerate}[(a)]
\item for a generic restriction of a Weyl chamber flow, $\mc P / \mc S = \prod_{i=1}^n \mc P_i / \mc S_i$ and $[\mc P, \mc P]/ (\mc S \cap [\mc P, \mc P])$
\item for a generic restriction of a twisted Weyl chamber flow, $\mc P / \mc S = V \times \prod_{i=1}^n \mc P_i / \mc S_i$, and $([\mc P_{ss}, \mc P_{ss}] \cdot \mc P_V) / ( \mc S \cap ([\mc P_{ss}, \mc P_{ss}]\cdot \mc P_V) ) = V \times \prod_{i=1}^n [\mc P_i/ \mc S_i,\mc P_i / \mc S_i]$
\end{enumerate}
\end{lemma}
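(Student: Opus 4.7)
The plan is to show that subgroups of $\mc P$ coming from distinct simple factors commute modulo $\mc S$ (and likewise that $\mc P_V$ commutes with them and is itself abelian modulo $\mc S$ in the twisted case), then invoke the universal property of the free product $\mc P = *_{\chi} U_\chi$ to construct an explicit inverse to the natural multiplication map.

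For the commuting claim, fix $u_1 \in U_{\chi_1}$ and $u_2 \in U_{\chi_2}$ with $\chi_1 \in \Delta_{G_i}$, $\chi_2 \in \Delta_{G_j}$, $i \ne j$. Because $G_i$ and $G_j$ commute elementwise in $G = \prod_k G_k$, the word $u_1 u_2 u_1^{-1} u_2^{-1}$ is a contractible cycle in $\mc P$ with combinatorial pattern $(\chi_1,\chi_2,\chi_1,\chi_2)$. Genericity condition $(\mf G 2)$ makes $\chi_1,\chi_2$ linearly independent as linear functionals on $\R^k \times \Z^l$, so the open cone $\set{a : \chi_1(a) < 0,\ \chi_2(a) < 0}$ meets the lattice of allowed $a$, exhibiting this cycle as stable. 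Hence it lies in $\mc S$, and standard commutator identities combined with normality of $\mc S$ promote the basic fact to $[\mc P_i,\mc P_j] \subset \mc S$. Conditions $(\mf G 4)$ and $(\mf G 5)$ give the analogous conclusions for cross-commutators between $\mc P_i$ and $\mc P_V$, and for basic commutators inside $\mc P_V$ (where the word is automatically a cycle since $V$ is abelian). Consequently multiplication induces a surjective homomorphism $\Psi : \prod_i \mc P_i / \mc S_i \to \mc P / \mc S$, respectively $\Psi : V \times \prod_i \mc P_i / \mc S_i \to \mc P / \mc S$ in the twisted case; surjectivity holds because the $\mc P_i$ (and $\mc P_V$) generate $\mc P$.

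To exhibit the inverse, for each $i$ I will construct a continuous projection $\phi_i : \mc P \to \mc P_i / \mc S_i$ by extending, via the universal property of the free product, the canonical quotient on each $U_\chi$ with $\chi \in \Delta_{G_i}$ and the trivial map on all other $U_\chi$. Geometrically, $\phi_i$ simply deletes the steps of a Lyapunov path whose exponent is not in $\Delta_{G_i}$. The main verification is $\phi_i(\mc S) \subset \mc S_i$: for a stable cycle $\sigma = u_0 \cdots u_{n-1}$, the sub-word $\sigma_i = \phi_i(\sigma)$ satisfies $\omega(\sigma_i) = \pi_i(\omega(\sigma)) = e$, because distinct $G_k$ commute in $G$ and $G = \prod_k G_k$ is a direct product; hence $\sigma_i$ is itself a cycle in $\mc P_i$, and since its combinatorial pattern is a subsequence of $\sigma$'s, it is stable and lies in $\mc S_i$. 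Normal and topological closures of $\mc S$ and $\mc S_i$ are preserved because $\phi_i$ is a continuous group homomorphism. The combined map $(\phi_1,\dots,\phi_n)$, augmented by $\omega|_{\mc P_V} : \mc P_V \to V$ in the twisted case, is a two-sided inverse to $\Psi$, so $\Psi$ is an isomorphism.

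The statements on $[\mc P,\mc P]$ and on $[\mc P_{ss},\mc P_{ss}] \cdot \mc P_V$ follow by restriction of the isomorphism: the image of $[\mc P,\mc P]$ in $\mc P / \mc S$ is the commutator subgroup of the target, which factors as $\prod_i [\mc P_i/\mc S_i,\mc P_i/\mc S_i]$, with an additional $V$ summand in the twisted case contributed by $\mc P_V$ (since $V$ is abelian, the whole $V$ factor survives). The identification $\mc P_V / (\mc S \cap \mc P_V) = V$ used above holds because $V = \bigoplus_{\chi \in \Phi_\pi} U_\chi$ as a vector space makes $\omega|_{\mc P_V}$ the abelianization of $\mc P_V$, while $(\mf G 5)$ places every basic commutator of $\mc P_V$ into $\mc S$, giving $\mc S \cap \mc P_V = [\mc P_V,\mc P_V]$. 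The main obstacle will be the continuity of $\phi_i$ in the identification topology on $\mc P$; the algebraic pieces are bookkeeping with the free-product universal property, but verifying that dropping steps is continuous requires care analogous to that used in Lemma \ref{lem:loc-sec}.
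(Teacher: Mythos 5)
Your proof is correct and rests on the same key observation as the paper's: genericity conditions $(\mf G2)$, $(\mf G4)$, $(\mf G5)$ exhibit the basic commutators between Lyapunov subgroups from distinct factors (and within $\mc P_V$) as stable cycles, hence elements of $\mc S$. The paper stops there and simply asserts that ``the lemma follows''; your explicit deletion projections $\phi_i$ and the check that they carry $\mc S$ into $\mc S_i$ supply the omitted bookkeeping, and the continuity worry you flag at the end is minor --- on each fixed combinatorial pattern $\phi_i$ is just a coordinate projection compatible with the identifications \eqref{eq:out-n-back}--\eqref{eq:redundant}, so it descends continuously to the quotient without any of the section-building machinery of Lemma \ref{lem:loc-sec}.
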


\begin{proof}
We first treat the case of Weyl chamber flows. The lemma follows once we can show that $[\rho_1,\rho_2] \in \mc S$, once $\rho_1 \in U_{\chi_1}$ and $\rho_2 \in U_{\chi_2}$ with $U_{\chi_2} \subset G_i \not= G_j \supset U_{\chi_2}$. Observe that condition ($\mf G 2$) implies that there exists an $X \in \R^k \times \Z^l$ such that $\chi_1(X), \chi_2(X) < 0$. Hence the cycle is stable, and the result follows for the semisimple case.

In the twisted case, observe that condition ($\mf G 5$) allows us to commute pairs within $\mc P_V$ modulo $\mc S$ and condition ($\mf G 4$) allows us to commute elements of $\mc P_V$ with elements of $\mc P_{ss}$. So since $V$ is abelian and generated by the subgroups making up $\mc P_V$, we get the result. 
\end{proof}

\section{The Periodic Cycle Functional}
\label{sec:pcf}

In this section we develop the main tool for showing cocycle rigidity, the periodic cycle functional. We recall the definition of the periodic cycle functional, and list some key properties, which can be found, for instance, in \cite[Section 4.4.3]{katok-nitica}, \cite{knt00} or \cite{dk2005}. If $\beta$ is a $\theta$-H\:older cocycle (taking values in $\R$) over a partially hyperbolic $\R^k \times \Z^l$ action  $\alpha$ with Lyapunov foliations $\mc U_\chi$, define:

\[ p_\beta(x,y) = \lim_{n \to \infty} \beta(a^n,x) - \beta(a^n,y) \]

where $x$ and $y$ are in the same leaf of $\mc U_\chi$ and $a$ is chosen in such that $\chi(a)$ is negative, $p_\beta$ converges, is independent of the choice of contracting $a$. Then we may use the functionals $p_\beta$ to induce a groupoid morphism $P_\beta : \mf P \to \R$ by defining:

\[ P_\beta(x_0,x_1,\dots,x_n) = \sum_{i=0}^{n-1} p_\beta(x_{i+1},x_i) \]

Clearly, $P_\beta(\rho_1 * \rho_2) = P_\beta(\rho_1) + P_\beta(\rho_2)$, but in most cases $P_\beta(\rho_1 \cdot \rho_2) \not= P_\beta(\rho_1) + P_\beta(\rho_2)$, so $P_\beta$ is not a homomorphism from $\mc P$. However, since the group structure restricted to $\mc C$ coincides with concatenation, $P_\beta$ can be considered a continuous homomorphism from $\mc C$.  Recall the subgroup $\mc S$ defined in Section \ref{sec:suff}. The following follows immediately from the definition of $P_\beta$ as a telescoping sum and the ability to choose a fixed contracting element $a$ for stable cycles:

\begin{lemma}
\label{lem:stable-vanishing}
$P_\beta(\mc S) = 0$
\end{lemma}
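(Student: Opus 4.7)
The plan is to peel back the definition of $\mc S$ in the three layers in which it is built---stable cycles, the subgroup they generate, its normal closure, and the topological closure---and show vanishing at each stage. The core content lives in the first layer; the other three are formal consequences of additivity and continuity of $P_\beta$.

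Fix a stable cycle $\sigma = (x_0, x_1, \dots, x_n)$ with $x_n = x_0$, and choose a common $a \in \R^k \times \Z^l$ with $\chi_i(a) < 0$ for every $\chi_i$ appearing in the combinatorial pattern. Because the same $a$ contracts every segment, each limit $p_\beta(x_{i+1}, x_i) = \lim_{N\to\infty} \beta(a^N, x_{i+1}) - \beta(a^N, x_i)$ is well defined. Since the sum defining $P_\beta(\sigma)$ has only finitely many terms, I may interchange it with the limit and telescope:
\[ P_\beta(\sigma) \;=\; \lim_{N\to\infty} \sum_{i=0}^{n-1}\bigl(\beta(a^N, x_{i+1}) - \beta(a^N, x_i)\bigr) \;=\; \lim_{N\to\infty}\bigl(\beta(a^N, x_n) - \beta(a^N, x_0)\bigr) \;=\; 0. \]
Vanishing on the subgroup generated by the stable cycles is then automatic, because $P_\beta$ restricted to $\mc C$ is a homomorphism under concatenation, which coincides with the group law there.

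For the normal-closure step, let $\sigma$ be stable and let $\rho \in \mc P$. The translate $\sigma^{\omega(\rho)}$ has the same combinatorial pattern as $\sigma$, so the same $a$ contracts it, and the telescoping above (now based at $\omega(\rho)$) gives $P_\beta(\sigma^{\omega(\rho)}) = 0$. Decomposing $\rho \cdot \sigma \cdot \rho^{-1} = \rho \ast \sigma^{\omega(\rho)} \ast (\rho^{-1})^{\omega(\rho)}$ and using concatenation-additivity of $P_\beta$,
\[ P_\beta(\rho \cdot \sigma \cdot \rho^{-1}) \;=\; P_\beta(\rho) + P_\beta\bigl(\sigma^{\omega(\rho)}\bigr) + P_\beta\bigl((\rho^{-1})^{\omega(\rho)}\bigr) \;=\; P_\beta(\rho \cdot \rho^{-1}) \;=\; 0, \]
since $\rho \cdot \rho^{-1}$ reduces to the trivial cycle via \eqref{eq:out-n-back} and \eqref{eq:redundant} and $P_\beta$ respects these identifications (the former contributes $p_\beta(y,x)+p_\beta(x,y)=0$, the latter contributes $p_\beta(x,x)=0$). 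The topological-closure step is then forced by continuity of $P_\beta$ on each combinatorial-pattern stratum, inherited from continuity of the functionals $p_\beta$ along leaves, since $\{0\}$ is closed in $\R$.

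The only subtle point is the conjugation argument: one must observe both that right-translating a stable cycle preserves its combinatorial pattern (and hence its stability) and that $P_\beta$ genuinely descends to the identification space, so that the apparent dependence of $P_\beta(\rho) + P_\beta((\rho^{-1})^{\omega(\rho)})$ on $\rho$ actually cancels. Once these are in hand, the proof reduces to the single telescoping sum displayed above.
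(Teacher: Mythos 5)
Your proof is correct and rests on exactly the same core idea as the paper's one-line justification: for a stable cycle one can choose a single contracting $a$ for every leg, and then $P_\beta$ telescopes to zero. The paper simply asserts that vanishing on $\mc S$ ``follows immediately'' from this; you have made explicit the three extension steps (generated subgroup, normal closure, topological closure) that the paper leaves to the reader, and each step is handled by the right tool (concatenation-additivity, the fact that right translation preserves combinatorial patterns, continuity on strata).

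One small imprecision worth flagging: your decomposition $\rho\cdot\sigma\cdot\rho^{-1} = \rho * \sigma^{\omega(\rho)} * (\rho^{-1})^{\omega(\rho)}$ tacitly assumes $\omega(\sigma)=e$, whereas a stable cycle is only required to close up in $M$, so one could have $\omega(\sigma)=\gamma\in\Gamma$ (e.g.\ unipotent lattice elements lying in a stable horospherical). In that case the last factor should be $(\rho^{-1})^{\omega(\rho)\gamma}$. This does not break the argument, because $P_\beta$ is computed after projecting to $M=G^{(\pi)}/\Gamma$, and right translation by $\gamma\in\Gamma$ does not change the projection, so $P_\beta\bigl((\rho^{-1})^{\omega(\rho)\gamma}\bigr)=P_\beta\bigl((\rho^{-1})^{\omega(\rho)}\bigr)$ and your cancellation goes through verbatim.
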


If $\Omega$ is a sufficient subclass of Lypaunov paths, let $\mc C(\Omega) = \mc C \cap \Omega$, the Lyapunov cycles in $\Omega$ at a fixed base point $x_0$.

\begin{proposition}
\label{prop:pcf-suff}
Let $\Omega$ be a sufficient subclass of Lyapunov paths for a partially hyperbolic action $\alpha$ and $x_0$ be a fixed base point. If $\beta$ is a cocycle over  $\alpha$ taking values in a vector space, then $\beta$ is cohomologous to a constant cocycle if and only if $P_\beta$ vanishes on $\mc C(\Omega)$.
\end{proposition}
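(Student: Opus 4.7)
The proof splits into two directions. For the easy direction, assume $\beta(a,x) = h(\alpha(a)x) - h(x) + j(a)$ for continuous $h$ and homomorphism $j$. Plugging into the limit defining $p_\beta$, the $j$ terms cancel and one is left with $\beta(a^n,x) - \beta(a^n,y) = [h(\alpha(a^n)x) - h(\alpha(a^n)y)] + [h(y) - h(x)]$; for $x, y$ on a common stable leaf, the first bracket tends to $0$ by continuity of $h$ and the contracting behavior of $a$, giving $p_\beta(x,y) = h(y) - h(x)$. Then $P_\beta$ telescopes along any Lyapunov path to $h(x_0) - h(x_n)$ and so vanishes on every cycle, in particular on $\mc C(\Omega)$.

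For the converse, suppose $P_\beta$ vanishes on $\mc C(\Omega)$. For each $x \in M$, use property (iv) to pick a path $\rho_x \in \Omega$ from $x_0$ to $x$, and set $h(x) := -P_\beta(\rho_x)$. Properties (i) and (ii) together with the vanishing hypothesis ensure $h$ does not depend on the choice, since two such paths differ by a cycle in $\mc C(\Omega)$. The cohomological equation hinges on the identity
\[ P_\beta(\alpha(a)\rho) - P_\beta(\rho) = \beta(a, x) - \beta(a, y) \]
for any Lyapunov path $\rho : x \to y$, obtained by applying the cocycle equation inside the limit defining $p_\beta$ (on each pair of consecutive vertices) and using H\"older continuity to push the error terms to $0$, then summing over segments. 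Picking an $\Omega$-path $\eta^a_x : x \to \alpha(a)x$ yields $h(\alpha(a)x) - h(x) = -P_\beta(\eta^a_x)$, so the cohomological equation reduces to showing that $F_a(x) := \beta(a,x) + P_\beta(\eta^a_x)$ is constant in $x$; if so, setting $j(a) := F_a(x_0)$ gives the desired constant cocycle, and the homomorphism property of $j$ is forced by the cocycle equation for $\beta$.

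To establish constancy of $F_a$, I would compare two $\Omega$-paths from $x$ to $\alpha(a)y$, namely $\tau * \eta^a_y$ and $\eta^a_x * \tau^a$, where $\tau$ and $\tau^a$ are chosen in $\Omega$ connecting $x \to y$ and $\alpha(a)x \to \alpha(a)y$ respectively. Since they share endpoints, the associated cycle lies in $\mc C(\Omega)$ and contributes zero, which combined with the displayed identity applied to $\tau$ rearranges to $F_a(x) = F_a(y)$. H\"older regularity of $h$ then follows from H\"older regularity of $p_\beta$ inherited from $\beta$, together with the fact that nearby points in $M$ can be connected by Lyapunov paths of uniformly bounded combinatorial length (a consequence of Lemma \ref{lem:loc-sec} in the homogeneous applications). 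The main obstacle is the last identification: the rearrangement requires $P_\beta(\tau^a) = P_\beta(\alpha(a)\tau)$, i.e., that $P_\beta$ vanishes on the $\mf P$-cycle $\tau^a * \overline{\alpha(a)\tau}$. Since $\alpha(a)\tau$ need not itself lie in $\Omega$, this is not immediate from the vanishing hypothesis; in the paper's applications the sufficient subclasses arise from $\alpha$-invariant subgroups of $\mc P$ via Lemma \ref{lem:subgroup-sufficient}, and that invariance closes the gap.
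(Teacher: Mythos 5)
Your proof follows essentially the same route as the paper's: the forward direction by observing $p_\beta(x,y) = h(y)-h(x)$ and telescoping; the converse by setting $h(x)$ to be $\mp P_\beta$ of an $\Omega$-path to $x$, establishing the key identity $P_\beta(\alpha(a)\rho) - P_\beta(\rho) = \beta(a,x)-\beta(a,y)$ by pushing the cocycle equation through the limit defining $p_\beta$, and then manufacturing the homomorphism $j$. You organize the final step as a cycle-comparison (constancy of $F_a$) rather than the paper's direct substitution into $h(\alpha(a)x) = P_\beta(\rho_a * \alpha(a)\rho_x)$, but these are the same computation rearranged. Your sign convention $h = -P_\beta(\rho_x)$ is in fact the one that makes the final identity come out in the stated form $\beta(a,x) = h(\alpha(a)x) - h(x) + j(a)$; the paper's displayed telescope flips a sign (the sum $\sum_{i} [\beta(a,x_i)-\beta(a,x_{i+1})]$ telescopes to $\beta(a,x_0)-\beta(a,x)$, not the reverse), which is absorbed by replacing $h$ with $-h$.

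The gap you flag at the end is real, and it is worth emphasizing that it is present in the paper's argument as well, not only in yours: the line $h(\alpha(a)x) = P_\beta(\rho_a * \alpha(a)\rho_x)$ tacitly identifies this value with $P_\beta$ of the $\Omega$-path defining $h(\alpha(a)x)$, which requires the cycle formed by those two paths to lie in $\mc C(\Omega)$, hence requires $\alpha(a)\rho_x \in \Omega$. The definition of a sufficient subclass does not ask for $\alpha$-invariance, so as literally stated the Proposition's proof has an unstated hypothesis. As you correctly observe, in every application made in the paper $\Omega$ comes from a subgroup of $\mc P$ that is invariant under the automorphism of $\mc P$ induced by $\Ad_{\exp\iota(a)}$ on the factors $U_\chi$ (commutator subgroups and $\mc P_V$ are invariant under any automorphism respecting the free-product structure), so the gap closes. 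If one wanted the Proposition to be correct verbatim for arbitrary sufficient subclasses, one should add $\alpha$-invariance of $\Omega$ (i.e., $\alpha(a)\Omega \subseteq \Omega$) as a fifth axiom, or alternatively state the Proposition only for subclasses arising from Lemma \ref{lem:subgroup-sufficient} applied to $\exp\iota(\cdot)$-conjugation-invariant subgroups. Your proposal is therefore not only correct but locates a small imprecision in the paper's own exposition.
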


\begin{proof}
Direct computation shows that if $\beta$ is cohomologous to a constant, $P_\beta$ vanishes on $\mc C$. So we wish to show the opposite. Define a function $h(x) = P_\beta(\rho_x)$, where $\rho_x$ is any path from $x_0$ ending at $x \in M$ and lying in $\Omega$. Then the vanishing of $P_\beta$ on $\mc C(\Omega)$ implies that $h$ is well-defined. Choose $x \in M$ and $a \in \R^k \times Z^l$. Then choose a path $\rho_x$ from $x_0$ to $x$ and a path $\rho_a$ from $x_0$ to $\alpha(a)x_0$. Then $\alpha(a)\rho_x$ is a path from $\alpha(a)x_0$ to $\alpha(a)x$ and:

\begin{eqnarray*}
 h(\alpha(a)x) & = & P_\beta(\rho_a * \alpha(a)\rho_x) \\
 & = & P_\beta(\rho_a) + P_\beta(\alpha(a)\rho_x) \\
 & = & P_\beta(\rho_a) + \sum_{i=0}^{n-1} p_\beta(\alpha(a)x_{i+1},\alpha(a)x_i) \\
 & = & P_\beta(\rho_a) + \sum_{i=0}^{n-1} \lim_{k \to \infty} \beta(a^k,\alpha(a)x_{i+1}) - \beta(a^k,\alpha(a)x_i) \\
 & = & P_\beta(\rho_a) + \sum_{i=0}^{n-1} \beta(a,x_i) - \beta(a,x_{i+1}) + \lim_{k \to \infty} \beta(a^{k+1},x_{i+1}) - \beta(a^{k+1},x_i)  \\
 & = & P_\beta(\rho_a) + \beta(a,x) - \beta(a,x_0) + h(x) \\
\beta(a,x) & = & h(\alpha(a)x) -h(x) + (\beta(a,x_0) - h(\alpha(a)x_0))
\end{eqnarray*}

The cocycle equation then guarantees that $j(a) = \beta(a,x_0) - h(\alpha(a)x_0)$ is a homomorphism. That $h$ is H\"older follows from H\"older transitivity of Lyapunov paths for homogeneous actions. 
\end{proof}

\section{Vanishing of Homomorphisms}

With the exception of Section \ref{sec:twisted-vanishing}, we deal only with Weyl chamber flows, not twisted Weyl chamber flows in this section. It contains the main new technique for proving vanishing. We follow the general strategy of \cite{vinhage} and \cite{vinhage-wang} by looking for obstructions to homomorphisms in the theory of topological groups and central extensions. However, we arrive at the central extension in a surprising new way: indeed, the contractible cycles $\mc C^c/ \mc S$ {\it will} have homomorphisms into abelian groups. In the case of $\Z^2 \curvearrowright (SL(2,\R) \times SL(2,\R) ) /\Gamma$, the negative resonance of the roots will mean that for a generic restriction $\mc P/ \mc S = (\R * \R)^2$. This group and its contractible cycle group clearly has homomorphisms to $\R$ (and so will its cycle group). Rather than looking at the contractible cycles separately, we will treat them together with the non-contractible ones. This will allow us to arrive at another classical presentation of the universal central extension of a group. 

\subsection{The Hopf Presentation}
\label{sec:hopf}

Let $H$ be any group (which may or may not carry a topology). A universal central extension of $H$ is a group $\mbf{H}$ with a projection $p : \mbf{H} \to H$ such that if $1 \to Z \to L \to H \to 1$ is any central extension of $H$, then there is a unique map $\phi : \mbf{H} \to L$ extending the identity on $H$.

Let $X$ be a set of generators for $H$. Note that $X$ may have arbitrary cardinality. Then let $\mc F(X)$ be the free group with generators $X$, and $\mc R(X)$ be the kernel of the obvious map $\mc F(X) \to H$ (ie, the relations on $H$ with respect to generators $X$). The following is the classical description for the universal central extension of a perfect group in a general algebraic setup. It was first shown by H. Hopf, and a modern treatment can be found in, for instance, \cite[Theorem 4.1.3]{rosenberg}:

\begin{proposition}[The Hopf Presentation]
If $H$ is a perfect group, then $\mbf{H} = [\mc F(X), \mc F(X)] / [\mc F(X), \mc R(X)]$ is a universal central extension of $H$.
\end{proposition}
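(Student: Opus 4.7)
The plan is to establish, in order: (i) that $\mathbf{H}$ is well-defined and fits into a central extension $1 \to K \to \mathbf{H} \to H \to 1$, (ii) existence of a lift $\mathbf{H} \to L$ for any central extension $L \to H$, and (iii) uniqueness of that lift. Throughout I abbreviate $\mathcal{F} = \mathcal{F}(X)$ and $\mathcal{R} = \mathcal{R}(X)$.

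For (i), I would first check that $[\mathcal{F}, \mathcal{R}]$ is normal in $\mathcal{F}$, using normality of $\mathcal{R}$ and the identity $f' [f,r] f'^{-1} = [f' f f'^{-1},\, f' r f'^{-1}]$. The projection $q : \mathcal{F} \to H$ restricts to a map $[\mathcal{F},\mathcal{F}] \to [H,H] = H$, which is surjective because $H$ is perfect. Its kernel is $[\mathcal{F},\mathcal{F}] \cap \mathcal{R}$, and the containment $[\mathcal{F},\mathcal{R}] \subseteq [\mathcal{F},\mathcal{F}] \cap \mathcal{R}$ gives a surjection $p : \mathbf{H} \to H$ with kernel $K = ([\mathcal{F},\mathcal{F}] \cap \mathcal{R})/[\mathcal{F},\mathcal{R}]$. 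Centrality of $K$ in $\mathbf{H}$ is immediate: for $r \in [\mathcal{F},\mathcal{F}] \cap \mathcal{R}$ and any $f \in [\mathcal{F},\mathcal{F}]$, the commutator $[f,r]$ lies in $[\mathcal{F},\mathcal{R}]$ and so is trivial in $\mathbf{H}$.

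For (ii), given a central extension $1 \to Z \to L \xrightarrow{\pi} H \to 1$, I would pick for each $x \in X$ a lift $\tilde{x} \in \pi^{-1}(x)$; by the universal property of the free group this extends uniquely to a homomorphism $\phi : \mathcal{F} \to L$ with $\pi \circ \phi = q$. In particular $\phi(\mathcal{R}) \subseteq \ker \pi = Z$, and since $Z$ is central, $\phi([\mathcal{F},\mathcal{R}]) \subseteq [L, Z] = \{1\}$. Thus $\phi$ descends to a map $\mathcal{F}/[\mathcal{F},\mathcal{R}] \to L$, and restricting to $\mathbf{H} \subseteq \mathcal{F}/[\mathcal{F},\mathcal{R}]$ yields the desired lift of $p$.

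For (iii), the key point is that $\mathbf{H}$ itself is perfect. A class in $\mathbf{H}$ has a representative $w = \prod [f_i, f_i'] \in [\mathcal{F},\mathcal{F}]$; since $H = [H,H]$, I can write $f_i = g_i r_i$ and $f_i' = g_i' r_i'$ with $g_i, g_i' \in [\mathcal{F},\mathcal{F}]$ and $r_i, r_i' \in \mathcal{R}$. Because $\mathcal{R}$ maps into the center of $\mathcal{F}/[\mathcal{F},\mathcal{R}]$, the identity $[ab, cd] = [a,c]$ (valid whenever $b,d$ are central) shows $w \equiv \prod [g_i, g_i']$ modulo $[\mathcal{F},\mathcal{R}]$, exhibiting $w$ as a product of commutators of elements of $\mathbf{H}$. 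Given two lifts $\psi_1, \psi_2 : \mathbf{H} \to L$ of $p$, the difference $g \mapsto \psi_1(g)\psi_2(g)^{-1}$ lands in the central subgroup $Z$, is a homomorphism by centrality, and so vanishes on $[\mathbf{H}, \mathbf{H}] = \mathbf{H}$. The main obstacle is the bookkeeping for perfectness of $\mathbf{H}$; this is the one place where perfectness of $H$ must be combined with the structure of the relations modulo $[\mathcal{F},\mathcal{R}]$, and the remainder of the argument is formal once it is in hand.
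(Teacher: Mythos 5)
The paper itself offers no proof of this proposition---it is stated as a classical result and the reader is referred to Rosenberg, Theorem 4.1.3. So there is no ``paper's proof'' to compare against; what you have produced is a self-contained reconstruction, and it is correct. Your three-step scheme (the extension is central; a lift exists; the lift is unique because $\mathbf{H}$ is perfect) is exactly the standard textbook argument. A few points worth affirming: the inclusion $[\mathcal F,\mathcal R]\subseteq[\mathcal F,\mathcal F]\cap\mathcal R$ and the centrality argument in step (i) are right; in step (ii) the descent works because $\phi(\mathcal R)\subseteq Z$ and $[L,Z]=1$, and restricting the induced map $\mathcal F/[\mathcal F,\mathcal R]\to L$ to the subgroup $\mathbf H$ gives a lift commuting with the projections; and in step (iii), your decomposition $f_i=g_ir_i$ (choose $g_i\in[\mathcal F,\mathcal F]$ with $q(g_i)=q(f_i)$, then $r_i:=g_i^{-1}f_i\in\mathcal R$) together with the commutator identity $[ab,cd]=[a,c]$ for central $b,d$ does show $\mathbf H$ is perfect. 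You flagged perfectness of $\mathbf H$ as the ``main obstacle,'' but the argument you sketched for it is in fact complete; nothing is missing. One small stylistic observation: it is cleaner to state explicitly that $\mathcal R/[\mathcal F,\mathcal R]$ is central in $\mathcal F/[\mathcal F,\mathcal R]$ (immediate from the definition of the quotient) before invoking the $[ab,cd]=[a,c]$ identity, since that is the centrality you are actually using.
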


We now refine this construction to more closely reflect our needs. Let $H$ be a perfect group and $\mc U = \set{U_1,\dots,U_m}$ be a set of subgroups which generate a perfect group $H$. Then let $\mc F(\mc U)$ be the free product of the groups $U_i$ and $\mc R(\mc U)$ be the kernel of the obvious map $\mc F(\mc U) \to H$ (ie, the relations on $H$ with generating groups $\mc U$).

\begin{corollary}
\label{cor:perf-central}
If $H$ and $\mc U$ are as described above, then $\widetilde{H} = [\mc F(\mc U),\mc F(\mc U)] / [\mc F(\mc U), \mc R(\mc U)]$ is a perfect central extension of $H$.
\end{corollary}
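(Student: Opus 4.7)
The plan is to mirror the classical Hopf argument from the previous proposition, with the free group $\mc F(X)$ replaced throughout by the free product $\mc F(\mc U)$. There are three claims to verify: that $\widetilde{H}$ is a well-defined group, that it surjects onto $H$ with central kernel, and that it is perfect.

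First I would check that $[\mc F(\mc U), \mc R(\mc U)]$ is normal in $\mc F(\mc U)$ using the conjugation identity $g[f,r]g^{-1} = [gfg^{-1}, grg^{-1}]$ together with the normality of $\mc R(\mc U)$ in $\mc F(\mc U)$. This guarantees $\widetilde{H}$ is a group. The canonical quotient $\mc F(\mc U) \to H$ then restricts to a surjection from $[\mc F(\mc U), \mc F(\mc U)]$ onto $[H,H] = H$ (since $H$ is perfect) and kills $[\mc F(\mc U), \mc R(\mc U)]$, inducing a surjection $\widetilde{H} \twoheadrightarrow H$ with kernel $([\mc F(\mc U), \mc F(\mc U)] \cap \mc R(\mc U)) / [\mc F(\mc U), \mc R(\mc U)]$. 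Centrality of the kernel is then immediate: for any $r \in \mc R(\mc U) \cap [\mc F(\mc U), \mc F(\mc U)]$ and $c \in [\mc F(\mc U), \mc F(\mc U)]$, the commutator $[c,r]$ tautologically lies in $[\mc F(\mc U), \mc R(\mc U)]$ and so vanishes in $\widetilde{H}$.

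The main obstacle is perfectness. I would show that every generating commutator $[f_1,f_2] \in [\mc F(\mc U), \mc F(\mc U)]$ is congruent modulo $[\mc F(\mc U), \mc R(\mc U)]$ to a commutator $[c_1, c_2]$ of elements $c_i$ that already lie in $[\mc F(\mc U), \mc F(\mc U)]$. Since $H$ is perfect, each $f_i$ can be written as $c_i r_i$ with $c_i$ a product of commutators in $\mc F(\mc U)$ and $r_i \in \mc R(\mc U)$. Expanding $[c_1 r_1, c_2 r_2]$ via the identities $[xy,z] = x[y,z]x^{-1}[x,z]$ and $[x,yz] = [x,y]\, y[x,z] y^{-1}$, every term involving a factor $r_i$ is (after using the normality of $\mc R(\mc U)$ to rewrite conjugates $g r_i g^{-1}$ back as elements of $\mc R(\mc U)$) a commutator of an element of $\mc F(\mc U)$ with an element of $\mc R(\mc U)$, hence lies in $[\mc F(\mc U), \mc R(\mc U)]$. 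Thus $[f_1, f_2] \equiv [c_1, c_2]$ in $\widetilde{H}$, exhibiting each generator of $\widetilde{H}$ as a commutator of elements of $\widetilde{H}$. The delicacy lies in this last bookkeeping step: one must confirm that every residual term containing an $r_i$ genuinely reduces to $[\mc F(\mc U), \mc R(\mc U)]$ after appropriate reshuffling, rather than merely to $\mc R(\mc U)$ or $[\mc F(\mc U), \mc F(\mc U)]$.
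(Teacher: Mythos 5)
Your proof is correct but takes a genuinely different route from the paper's. The paper observes that $\widetilde{H}$ is an intermediate quotient of the full Hopf presentation $\mbf{H} = [\mc F(X),\mc F(X)]/[\mc F(X),\mc R(X)]$ (taking $X = \bigcup_i U_i$, since $\mc F(\mc U)$ is a quotient of $\mc F(X)$ compatible with the maps to $H$); centrality and perfectness of $\widetilde{H}$ then descend automatically from those of $\mbf{H}$, and the only thing the paper actually proves is that $\mbf{H}$ is perfect, which it does via the universal property (if $[\mbf H,\mbf H] \ne \mbf H$ one gets a nonidentity self-map of $\mbf H$ over $H$, contradicting uniqueness). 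You instead verify the three properties directly: normality of $[\mc F(\mc U),\mc R(\mc U)]$, centrality of the kernel, and perfectness, the last by the commutator-expansion bookkeeping $[c_1 r_1, c_2 r_2] \equiv [c_1,c_2] \pmod{[\mc F(\mc U),\mc R(\mc U)]}$ with $c_i \in [\mc F(\mc U),\mc F(\mc U)]$, $r_i \in \mc R(\mc U)$. Your computation checks out: using $[xy,z] = {}^x[y,z]\cdot[x,z]$ and $[x,yz] = [x,y]\cdot{}^y[x,z]$, every stray term is a conjugate of a commutator with one entry in $\mc R(\mc U)$, and normality of $[\mc F(\mc U),\mc R(\mc U)]$ absorbs the conjugates. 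The paper's route is shorter and leans on the already-stated Hopf theorem plus a categorical argument; yours is self-contained and elementary, essentially re-deriving the Hopf argument in the free-product setting rather than reducing to it, which has the advantage of not requiring the reader to chase the factorization $\mc F(X) \twoheadrightarrow \mc F(\mc U)$ and its compatibility with the relation subgroups.
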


\begin{proof}
The corollary follows immediately when one notices that $\widetilde{H}$ is an intermediate factor of $\mbf{H} \to H$, and $\mbf{H}$ must be perfect. To see that $\mbf{H}$ is perfect, suppose that $\mbf H' = [\mbf H, \mbf H] \not= \mbf H$. Then $\mbf H'$ still covers $H$, since $H$ is perfect. Hence it is a central extension, and there is a homomorphism $\mbf{H} \to \mbf{H}' \subset \mbf H$ extending the identiy on $G$ by the universal property of $\mbf H$. Since $\id$ is also a homomorphism extending the identity on $H$ from $\mbf H$ to $\mbf H$, from the uniqueness property we conclude that $\mbf H' = \mbf H$.
\end{proof}

We will apply the above results to the group $G$, which is generated by its Lyapunov subgroups with respect to $\alpha$ (Lemma \ref{lem:generates}). Recall that $\mc P$ is the free product of such subgroups, and that $[\mc P, \mc P]$, the commutator subgroup, induces a sufficient Lyapunov subclass. We introduce the following notation: for the path and cycle spaces $\mc P$, $\mc C$ and $\mc C^c$, we let $\mc P_B$, $\mc C_B$ and $\mc C^c_B$ be their intersections with $[\mc P,\mc P]$ (the subscript $B$ is for ``bracket'').

\subsection{The Main Argument}

\begin{theorem}
\label{thm:key-technical}
If $\Phi : \mc P \to \R$ is continuous, $\Phi|_{\mc C}$ is a homomorphism whose kernel is normal in $\mc P$, and $\Phi(\mc S) = 0$, then $\Phi(\mc C_B) = 0$
\end{theorem}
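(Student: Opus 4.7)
The plan is to leverage the Hopf presentation (Corollary~\ref{cor:perf-central}) by interpreting $\Phi|_{\mc C_B}$ as a homomorphism on the preimage of $\Gamma$ inside the perfect central extension $\widetilde{G} := \mc P_B / [\mc P, \mc C^c]$ of $G$, and reducing the vanishing statement to the vanishing of the induced homomorphism on the central kernel. Write $K := \ker(\Phi|_{\mc C})$, which is normal in $\mc P$ by hypothesis, and $N := [\mc P, \mc C^c]$; note $N \subset \mc C^c_B$.

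First I would show that $\Phi(N) = 0$. Since both $K$ and $\mc C^c = \ker \omega$ are normal in $\mc P$, conjugation gives a well-defined continuous action of $\mc P$ on the abelian group $Q := \mc C^c / (K \cap \mc C^c)$, which embeds into $\R$ via the map induced by $\Phi$. Because $Q$ is abelian the action factors through $\mc P / \mc C^c = G$, and because $G$ is connected semisimple, any continuous homomorphism into the topological automorphism group of a subgroup of $\R$ must be trivial (the image is connected and lands either in a totally disconnected group, or in the scalings $\R^*$ of an ambient $\R$, and in the latter case perfectness of $G$ kills it). Hence $[p, c] \in K \cap \mc C^c$ for every $p \in \mc P$ and $c \in \mc C^c$, and continuity of $\Phi$ together with the fact that $N$ is the topological closure of the subgroup generated by such commutators forces $\Phi(N) = 0$.

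Consequently $\Phi|_{\mc C_B}$ descends to a continuous homomorphism $\bar\Phi : \mc C_B / N \to \R$. Under the Hopf projection $p : \widetilde{G} \to G$ we have $\mc C_B / N = p^{-1}(\Gamma)$, fitting into a central extension
\[ 1 \to Z \to \mc C_B / N \to \Gamma \to 1, \]
where $Z := \mc C^c_B / N$ is contained in the center of $\widetilde{G}$. Since $\Gamma$ admits no homomorphism to $\R$, once $\bar\Phi|_Z = 0$ is established the induced map on $\Gamma$ vanishes and $\Phi(\mc C_B) = 0$ follows.

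The main obstacle is therefore showing $\bar\Phi|_Z = 0$. This is the step for which the Hopf presentation is advertised as the key new ingredient. The plan is to combine Lemma~\ref{lem:path-directprod}, which splits $[\mc P, \mc P]$ modulo $\mc S$ along the simple factors $G_i$, with the hypothesis $\Phi(\mc S) = 0$ to reduce to each factor in turn. For factors without an almost rank one subfactor one expects to recover the strategy of \cite{vinhage-wang}, writing central elements as products of stable cycles. For an almost rank one factor this fails, and the plan is to exploit the central-extension description of $Z$ supplied by the Hopf presentation — rather than the Matsumoto presentation used in earlier work — to realize central classes as commutators of suitable lifts of path-group elements which can be pushed into $\mc S$ via $(\mf G 1)$–$(\mf G 2)$. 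This last step is where the novelty resides and will require the most care.
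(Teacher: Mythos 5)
Your overall skeleton — establish $\Phi([\mc P,\mc C^c])=0$, pass to a central extension of $G$ built from $\mc P_B$, and finally kill the induced homomorphism on $\Gamma$ — matches the paper's scheme. But the proposal has a genuine gap at the decisive step, and the step-1 argument is also shakier than what the paper does.

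The crucial missing idea is \emph{which} quotient of $\mc P_B$ to take. You form the full Hopf extension $\widetilde G := \mc P_B / [\mc P,\mc C^c]$ and then try to show $\bar\Phi$ vanishes on the whole center $Z = \mc C^c_B/[\mc P,\mc C^c]$, which you correctly flag as the hard part and leave as a sketch. The paper instead quotients $\mc P_B$ by $\ker\Phi|_{\mc C^c_B}$ (which contains $[\mc P,\mc C^c]$ by the first step). This has the effect that the resulting central kernel $Z = \mc C^c_B/\ker\Phi|_{\mc C^c_B}$ carries a \emph{continuous injective} homomorphism into $\R$ and is locally path-connected, hence is an abelian Lie group of dimension at most one. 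So the extension is a perfect \emph{Lie} central extension of the simply connected semisimple $G$, and the cited results from \cite{vinhage} say there are none nontrivial — which immediately gives $\Phi(\mc C^c_B)=0$ with no case analysis by factor, no reduction to stable cycles, and no appeal to $(\mf G 1)$–$(\mf G 2)$. Your sketch for an ``almost rank one factor'' (realizing central classes as commutators pushed into $\mc S$) is not what the paper does and would have to face exactly the rank-one $K_2$-type obstruction that the smaller quotient sidesteps. Without this reduction, the proposal does not close.

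On the first step, the paper proves $\Phi([\mc P,\mc C^c])=0$ concretely by the following factorwise approximation: to show $\Phi([\rho,\sigma])=0$ for $\rho\in\mc P_m$ and $\sigma\in\mc C^c\cap\mc P_m$, use irreducibility/density of $\pi_m(\Gamma)$ and the local section of $\omega|_{\mc P_m}$ to build $\rho_i\to\rho$ with $\omega(\rho_i)=\pi_m(\gamma_i)$, tack on complementary factors $\eta_i$ so $\rho_i\cdot\eta_i\in\mc C$, and then use that $\Phi|_{\mc C}$ is a homomorphism to the abelian group $\R$ (so it kills $[\mc C,\mc C]$) together with Lemma~\ref{lem:path-directprod} and $\Phi(\mc S)=0$. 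Your alternative via the conjugation action of $G$ on $Q=\mc C^c/(K\cap\mc C^c)$ is not obviously sound as stated: $Q$ carries the quotient topology, not the subspace topology from $\R$, and ``the topological automorphism group of a subgroup of $\R$'' is not a well-defined object without specifying this. To make your argument work you would first need to know $Q$ is a Lie group (which already uses the ``locally path-connected + continuous injection into $\R$'' observation that the paper reserves for the main step), then analyze the identity component of $\Aut(Q)$. That is repairable but considerably more delicate than the paper's density argument, and it reuses the very ingredient you are missing later. I would recommend adopting the paper's approximation argument for step 1 and, for the main step, quotienting by $\ker\Phi|_{\mc C^c_B}$ rather than by $[\mc P,\mc C^c]$.
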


\begin{proof}
We first show that $\Phi([\mc P, \mc C^c]) = 0$. It suffices to show that if $\rho \in \mc P$ is a path inside the $m\tth$ factor of $G$, and $\sigma \in  \mc C^c$ is also in the $m\tth$ factor of $G$, then $\Phi([\rho,\sigma]) = e$ (we may freely commute the others by Lemma \ref{lem:path-directprod} and since $\Phi(\mc S) = \set{e}$). Since $\Gamma$ is irreducible, we may choose $\gamma_i$ such that $\pi_m(\gamma_i)$ converges to $\pi_m(\omega(\rho))$. Choose a local section $s_m : U_m \to \mc P_m$ for $G_m$ (Lemma \ref{lem:loc-sec}), so that $\rho_i = \rho\cdot s_m(\omega(\rho)^{-1}\pi_m(\gamma_i))$ converges to $\rho$ in the group topology of $\mc P$. Note that $\omega(\rho_i) = \pi_m(\gamma_i)$, and $\rho_i$ is a path with components coming only from the $m\tth$ factor of $G$. Choose paths $\eta_i$ such that $\omega(\eta_i) = \prod_{k\not= m} \pi_k(\gamma_i)$. Then by construction $\omega(\rho_i \cdot \eta_i) = \gamma
_i$ and hence $\rho_i \cdot \eta_i \in \mc C$. But since distinct factors commute (Lemma \ref{lem:path-directprod}) and the target is abelian:

\[ \Phi([\rho,\sigma]) = \lim_{i \to \infty} \Phi([\rho_i,\sigma]) = \lim_{i \to \infty} \Phi([\rho_i \cdot \eta_i,\sigma]) = 0  \]

Let $Z = \mc C^c_B / \ker \Phi|_{\mc C^c_B}$ and $\widetilde{G} =  \mc P_B / \ker \Phi|_{\mc C^c_B}$. We may construct the topological extension:

\[ 1 \to Z \to \widetilde{G} \to G \to 1 \]

$Z$ must be a Lie group, since it is locally path connected and has a continuous injective homomorphism to a finite dimensional space. We claim that $Z$ is central in $\widetilde{G}$. Indeed, we have shown above that $\ker \Phi|_{\mc C^c_B}$  contains $[\mc P, \mc C^c]$. Observe that $\mc C^c = \ker (\mc P \to G)$. Then $\widetilde{G}$ will be a perfect central extension of $G$ by Corollary \ref{cor:perf-central}. But by \cite[Proposition 5.8, Lemma 5.9]{vinhage}, there are no nontrivial perfect Lie central extensions of simply connected, semisimple Lie groups. So $\Phi(\mc C^c_B) = 0$. But then $\Phi$ induces a homomorphism from $\mc C_B / \mc C^c_B = \Gamma$, which must vanish by our assumption on $\Gamma$ (it has no homomorphisms to $\R$ by assumption).

\end{proof}

\subsection{Proof of Theorem \ref{thm:main}}
\label{sec:twisted-vanishing}

We first address the untwisted case. By Proposition \ref{prop:pcf-suff}, it suffices to show that the periodic cycle functional vanishes on cycles in some suffcient subclass of Lyapunov paths. By Lemma \ref{lem:subgroup-sufficient}, it suffices to show that $P_\beta$ vanishes on the cycles in $[\mc P,\mc P] \subset \mc P$. We know that $P_\beta$ vanishes on $\mc S$ by Lemma \ref{lem:stable-vanishing}. Hence $P_\beta$ meets the criteria of Theorem \ref{thm:key-technical}, so $P_\beta(\mc C^c \cap [\mc P, \mc P]) = 0$. Hence it descends to a homomorphism for $\mc C_B / \mc C^c_B=  \Gamma$, and no such homomorphism exists by assumption. This is exactly the statement that $P_\beta$ vanishes on cycles in this sufficient Lyapunov subclass.

In the case of a twisted Weyl chamber flow, observe that we may follow the same techniques as the twisted case to get vanishing on $\mc C \cap [\mc P_{ss},\mc P_{ss}]$. Furthermore, notice that $[\mc P_{ss},\mc P_{ss}] \cdot \mc P_V$ corresponds to a sufficient subclass, and that 

\[ \mc C^c \cap ([\mc P_{ss},\mc P_{ss}] \cdot \mc P_V) \pmod {\mc S} = \mc C^c \cap [\mc P_{ss},\mc P_{ss}] \pmod {\mc S} \] 

So we get that $P_\beta$ descends to a homomorphism from $\Lambda$ with the same image. All such homomorphisms must vanish, see \cite{vinhage-wang}.

\bibliographystyle{plain}
\bibliography{local-rigidity-refs}

\end{document}